\newtheorem{theorem}{Theorem}[section]
\newtheorem{proposition}[theorem]{Proposition}
\theoremstyle{remark}
\newtheorem{remark}{Remark}[section]
\begin{document}

%\pagewiselinenumbers

\title[Cosmetic surgery and the $SL(2,\mathbb{C})$ Casson invariant for 2-bridge knots]{Cosmetic surgery and the $SL(2,\mathbb{C})$ Casson invariant for two-bridge knots}

\author{Kazuhiro Ichihara}
\address{Department of Mathematics, College of Humanities and Sciences, Nihon University, 
3-25-40 Sakurajosui, Setagaya-ku, Tokyo 156-8550, Japan.}
\email{ichihara@math.chs.nihon-u.ac.jp}
\thanks{Ichihara is partially supported by JSPS KAKENHI Grant Number 26400100.}

\author{Toshio Saito}
\address{Department of Mathematics,
Joetsu University of Education,
1 Yamayashiki, Joetsu 943-8512, Japan}
\email{toshio@juen.ac.jp }
\thanks{Saito is partially supported by JSPS KAKENHI Grant Number 15K04869.}

\dedicatory{Dedicated to Professor Makoto Sakuma on the occasion of his 60th birthday}

\subjclass{Primary 57M50; Secondary 57M25, 57M27, 57N10}

\date{\today}

\keywords{cosmetic surgery, $SL(2,C)$ Casson invariant, two-bridge knot}

\begin{abstract}
We consider the cosmetic surgery problem for two-bridge knots in the 3-sphere. 
It is seen that all the two-bridge knots at most 9 crossings other than $9_{27} = S(49,19)=C[2,2,-2,2,2,-2]$ admits no purely cosmetic surgery pairs. 
Then we show that any two-bridge knot of the Conway form $[2x,2,-2x,2x,2,-2x]$ 
with $x \ge 1$ admits no cosmetic surgery pairs yielding homology 3-spheres, where $9_{27}$ appears for $x=1$. 
Our advantage to prove this is using the $SL(2,\mathbb{C})$ Casson invariant. 
\end{abstract}

\maketitle

\section{Introduction}

\textit{Dehn surgery} can be regarded as an operation to make a `new' 3-manifold from a given one. 
Of course the trivial Dehn surgery leaves the manifold unchanged, but `most' non-trivial ones would change the topological type. 
In fact, Gordon and Luecke showed as the famous result in \cite{GordonLuecke} that 
any non-trivial Dehn surgery on a non-trivial knot in the 3-sphere $S^3$ never yields $S^3$. 

As a natural generalization, the following conjecture was raised. 

\medskip

\noindent
\textbf{Cosmetic Surgery Conjecture} (\cite[Problem 1.81(A)]{Kirby}): 
Two surgeries on inequivalent slopes are never purely cosmetic. 

\medskip

Here we say that two slopes are \textit{equivalent} if there exists a homeomorphism of the exterior of a knot $K$ taking one slope to the other, and two surgeries on $K$ along slopes $r_1$ and $r_2$ are \textit{purely cosmetic} if there exists an orientation preserving homeomorphism between the pair of the surgered manifolds. 

\begin{remark}
The Cosmetic Surgery Conjecture for ``chirally cosmetic'' case is not true: 
there exist counter-examples given by Mathieu \cite{Mathieu90, Mathieu92}. 
In fact, for example, $(18k+9)/(3k+1)$- and $(18k+9)/(3k+2)$-surgeries on the right-hand trefoil knot in $S^3$ yield orientation-reversingly homeomorphic pairs of 3-manifolds for any non-negative integer $k$, and it can be shown that such pairs of slopes are inequivalent. 
That is to say, the trefoil knot admits chirally cosmetic surgery pairs along inequivalent slopes. 
\end{remark}

In this paper, we consider cosmetic surgeries on a well-known class of knots in $S^3$, the \textit{two-bridge knots}. 
First, by using known results, we have the following in Section \ref{2-bridge}. 

\begin{proposition}\label{AtMost9}
All the two-bridge knots of at most $9$ crossings other than $9_{27} = S(49,19)=C[2,2,-2,2,2,-2]$ admit no purely cosmetic surgery pairs. 
\end{proposition}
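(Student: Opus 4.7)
The plan is to combine two known obstructions to purely cosmetic surgery with a finite case check against the list of two-bridge knots with at most $9$ crossings.

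\medskip

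\noindent\textbf{Step 1 (Casson--Walker obstruction).} I would first appeal to the theorem of Boyer--Lines: if $K\subset S^3$ admits a purely cosmetic surgery pair, then $\Delta_K''(1)=0$, equivalently the second Conway coefficient $a_2(K)$ vanishes. For a two-bridge knot $S(p,q)$, this invariant has a purely combinatorial expression that can be read off the Conway parameters, so the check is mechanical. Applied to each two-bridge knot with at most $9$ crossings, this already eliminates the vast majority of the candidates.

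\medskip

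\noindent\textbf{Step 2 (Heegaard Floer and slope obstructions).} For the handful of two-bridge knots with $\le 9$ crossings and $a_2(K)=0$, I would apply the result of Ni--Wu: any purely cosmetic pair $(r_1,r_2)$ on $K$ must satisfy $r_2=-r_1=\pm p/q$ with $q^2\equiv-1\pmod{p}$, and moreover $\tau(K)=0$, where $\tau$ is the Ozsv\'ath--Szab\'o concordance invariant. For each remaining candidate other than $9_{27}$, I would read off $\tau(K)$ from the standard tables (or compute it directly from the staircase complex of the two-bridge knot) and either use $\tau(K)\ne 0$ to rule out cosmetic surgery, or, failing that, check that no admissible slope $p/q$ on $K$ satisfies the residue condition.

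\medskip

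\noindent\textbf{Main obstacle.} The knot $9_{27}=S(49,19)$ has both $a_2=0$ and $\tau=0$, and admits slopes compatible with $q^2\equiv-1\pmod p$; thus none of these general-purpose obstructions rules it out, which is precisely why it must be excluded from the statement and is the focus of the subsequent sections of the paper. The real work, therefore, is the careful bookkeeping: enumerate the two-bridge knots of $\le 9$ crossings in Conway/Schubert form, tabulate $a_2$ and $\tau$ for each, and verify knot-by-knot that at least one of the obstructions above applies, with $9_{27}$ as the single exception. I expect this tabulation to be routine but tedious, with the only conceptual subtlety being that a knot with $a_2=0$ and $\tau=0$ whose slopes also satisfy the residue condition would be an additional exception, so one must confirm that no such further coincidence occurs in the given range.
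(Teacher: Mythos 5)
Your proposal uses exactly the two ingredients the paper uses --- the Boyer--Lines obstruction $\Delta_K''(1)\ne 0$ and the Ni--Wu condition $\tau(K)=0$ --- combined with a finite table check over two-bridge knots of at most $9$ crossings, differing only in the (immaterial) order in which the two filters are applied; the paper filters by $\tau=0$ first and then checks $\Delta_K''(1)$ on the survivors. This is essentially the same proof.
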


Here the knot $9_{27}$ in the Rolfsen's knot table is the two-bridge knot of the Schubert form $S(49,19)$ and the Conway form $C[2,2,-2,2,2,-2]$. 

In view of this, let us focus on the knot $9_{27}$. 
Previously, for the same reason, the first author considered this knot in \cite{Ichihara}, and it was shown that some pairs of surgeries give distinct manifolds. 

In this paper, for a family of knots including the knot $9_{27}$, we have the following. 

\begin{theorem}\label{MainThm}
Let $K_x$ be a two-bridge knot of the Conway form $[2x,2,-2x,2x,2,-2x]$ with $x \ge 1$. 
Then $K_x$ admits no cosmetic surgery pairs yielding homology 3-spheres, 
i.e., any $\frac{1}{n}$- and $\frac{1}{m}$-surgeries on $K_x$ are not purely cosmetic for $m \ne n$. 
In other words, all the homology 3-spheres obtained by Dehn surgeries on $K_x$ are mutually distinct. 
\end{theorem}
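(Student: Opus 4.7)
The plan is to distinguish the homology 3-spheres $S^3_{1/n}(K_x)$ for distinct $n$ via the $SL(2,\mathbb{C})$ Casson invariant $\lambda_{SL(2,\mathbb{C})}$, an orientation-preserving homeomorphism invariant of integral homology 3-spheres. Because $r$-surgery on a knot in $S^3$ yields a homology sphere only when $r = 1/n$ for some $n \in \mathbb{Z}$, the theorem reduces to showing that $\lambda_{SL(2,\mathbb{C})}(S^3_{1/n}(K_x)) \ne \lambda_{SL(2,\mathbb{C})}(S^3_{1/m}(K_x))$ whenever the slopes $1/n$ and $1/m$ are inequivalent, with a secondary invariant dealing with signs of $n$ as explained below.

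The central tool is the surgery formula of Curtis and Boden--Curtis for small knots, which for a filling slope $r$ on the exterior of a small knot $K$ expresses $\lambda_{SL(2,\mathbb{C})}(S^3_r(K))$ as a signed count of irreducible $SL(2,\mathbb{C})$-characters of $\pi_1(S^3 \setminus K)$ killed by $r$. Two-bridge knots are small by Hatcher--Thurston, so the formula applies to $K_x$. I would first use the standard two-generator presentation of $\pi_1(S^3 \setminus K_x)$ coming from the Conway form $[2x,2,-2x,2x,2,-2x]$ to write down the Riley polynomial and realize the irreducible character variety of $K_x$ as an explicit plane algebraic curve $C_x$; then analyze the boundary-restriction map from $C_x$ into the character variety of the peripheral torus in order to compute the Culler--Shalen norm $\|\mu + n\lambda\|_{K_x}$; and thereby extract a formula of the shape $\lambda_{SL(2,\mathbb{C})}(S^3_{1/n}(K_x)) = \alpha_x |n| + \beta_x$, valid for $|n|$ large, with $\alpha_x > 0$. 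Monotonicity in $|n|$ then gives the distinctness for all but finitely many pairs, and the remaining small cases can be checked by a direct evaluation of the formula.

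It remains to distinguish $+n$- from $-n$-surgery when $\lambda_{SL(2,\mathbb{C})}$ is insensitive to the sign of $n$: if $K_x$ is amphichiral, the slopes $\pm 1/n$ are equivalent via a self-homeomorphism of the exterior and there is nothing to do, while otherwise I would invoke the classical Casson invariant $\lambda(S^3_{1/n}(K_x)) = (n/2)\Delta''_{K_x}(1)$, whose linearity (rather than evenness) in $n$ separates opposite signs as soon as $\Delta''_{K_x}(1) \ne 0$, a condition easy to verify from the Alexander polynomial of $K_x$. The principal technical obstacle is the uniform-in-$x$ computation of the Riley polynomial and the associated Culler--Shalen norm: the periodic structure of the Conway form should produce a tractable recursion, but controlling the resulting algebraic expressions tightly enough both to certify $\alpha_x > 0$ and to rule out accidental coincidences among the finitely many small-$n$ values of $\lambda_{SL(2,\mathbb{C})}$ will be the heart of the argument.
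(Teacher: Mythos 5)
Your overall framework---reduce to $1/n$-surgeries, apply the Curtis/Boden--Curtis surgery formula via the Culler--Shalen norm, and use $\lambda_{SL(2,\mathbb{C})}$ to separate the resulting homology spheres---is the right one, and is essentially what the paper does. But your plan breaks down at exactly the one case that carries all the difficulty. Your primary mechanism (monotonicity of $\alpha_x|n|+\beta_x$ in $|n|$) can at best separate $1/n$ from $1/m$ when $|n|\ne|m|$; in fact the paper gets this case for free from Ni--Wu, whose condition (a) forces any purely cosmetic pair to satisfy $r_1=-r_2$, so the entire problem is the pair $1/n$ versus $-1/n$. For that pair you propose to fall back on the classical Casson invariant via $\lambda(S^3_{1/n}(K))=\tfrac{n}{2}\Delta''_K(1)$, asserting that $\Delta''_{K_x}(1)\ne 0$ is ``easy to verify.'' It is false: the paper's Section 5 computes $\Delta_{K_x}(t)=-x^4(t^{-3}+t^3)+(6x^4-x^2)(t^{-2}+t^2)-(15x^4-4x^2)(t^{-1}+t)+20x^4-6x^2+1$ and shows $\Delta''_{K_x}(1)=0$ for every $x\ge 1$ (these knots are slice, and also $\tau=0$). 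This vanishing is precisely why the family is interesting (Remark 1.2) and why the classical Casson invariant, Boyer--Lines, and the Heegaard Floer obstructions are all silent here. The amphichirality escape hatch does not apply either: e.g.\ $9_{27}=S(49,19)$ is not amphichiral ($19^2\not\equiv-1\bmod 49$), and even if it were, the induced homeomorphism of pairs would be orientation-reversing, which is not what ``purely cosmetic'' excludes.

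The missing idea is that $\lambda_{SL(2,\mathbb{C})}$ itself is \emph{not} insensitive to the sign of $n$: from the Boden--Curtis formula one gets
\[
\lambda_{SL(2,\mathbb{C})}(K(1/q))-\lambda_{SL(2,\mathbb{C})}(K(-1/q))
=\frac{1}{4}\sum_i W_i\left(|1-qN_i|-|-1-qN_i|\right)
=\frac{1}{2}\left(-\sum_{N_i>0}W_i+\sum_{N_i<0}W_i\right),
\]
a constant independent of $q$ determined entirely by the boundary slopes $N_i$ and their weights $W_i$. The heart of the paper is then not a Riley-polynomial computation at all, but a purely combinatorial enumeration of the boundary slopes of $K_x=S((8x^2-1)^2,\,32x^3-8x^2-8x+2)$ via the Mattman--Maybrun--Robinson substitution rules applied to the simple continued fraction $[0,2x,1,1,2x-2,1,2x-1,1,1,2x-1]$, yielding $25$ cases whose weighted signed sum equals $8x^2-12x+2>0$ for $x\ge 2$ (and $4$ for $x=1$). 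Your proposal, as written, cannot close the $\pm 1/n$ case, and the Riley-polynomial/character-variety computation you flag as the main technical obstacle is avoidable once one works with the boundary-slope formula for $\|\cdot\|_T$ instead.
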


\begin{remark}\label{rmk2}
This cannot be achieved by using known invariants; 
the (original) Casson invariant and the $\tau$-invariant defined by Ozsv\'{a}th-Szab\'{o} in \cite{OzsvathSzabo03}, and the correction term in Heegaard Floer homology. 
See Section \ref{AlexPoly} for details. 
\end{remark}

Our advantage in this paper is to use the $SL(2,\mathbb{C})$ version of the Casson invariant. 
Very roughly speaking, for a closed orientable 3-manifold $\Sigma$, the $SL(2,\mathbb{C})$ Casson invariant $\lambda_{SL(2,\mathbb{C})} (\Sigma)$ is defined by counting the (signed) equivalence classes of representations of the fundamental group $\pi_1 (\Sigma)$ in $SL(2,\mathbb{C})$. 
Based on the method to enumerate the boundary slopes for two-bridge knots developed in \cite{MattmanMaybrunRobinson}, we give calculations of the $SL(2,\mathbb{C})$ Casson invariant for the knots $K_x$'s. 
The calculations will be given in Section \ref{calculation}. 
Before that the formulae and the method used in the calculations will be explained in Section \ref{SL(2,C)Casson}. 

Practically our method can be applied further. 
However it seems not enough to prove that all the $K_x$'s have no purely cosmetic surgery pairs. 

\bigskip

Here we recall basic definitions and terminology about Dehn surgery. 

A \textit{Dehn surgery} is the following operation for a given knot $K$ (i.e., an embedded circle) in a 3-manifold $M$. 
Take the exterior $E(K)$ of $K$ (i.e., the complement of an open tubular neighborhood of $K$ in $M$), and then, glue a solid torus to $E(K)$. 
Let $\gamma$ be the slope (i.e., an isotopy class of a non-trivial unoriented simple loop) 
on the peripheral torus of $K$ in $M$ which is represented by the curve identified with the meridian of the attached solid torus via the surgery. 
Then, by $K(\gamma)$, we denote the manifold which is obtained by the Dehn surgery on $K$, and call it the 3-manifold obtained by Dehn surgery on $K$ along $\gamma$. 
In particular, the Dehn surgery on $K$ along the meridional slope is called the \textit{trivial} Dehn surgery. 

When $K$ is a knot in $S^3$, by using the standard meridian-longitude system, slopes on the peripheral torus are parametrized by rational numbers with $1/0$. 
Thus, when a slope $\gamma$ corresponds to a rational number $r$, we call Dehn surgery along $\gamma$ \textit{$r$-surgery}, and use $K(r)$ in stead of $K(\gamma)$.

\section{Two-bridge knots}\label{2-bridge}

For two-bridge knots, we use standard definitions based on \cite{BurdeZieschang}. 
See also \cite{BodenCurtis, MattmanMaybrunRobinson}. 

To show Proposition \ref{AtMost9}, we use the following two known results. 

One ingredient is the Casson invariant of 3-manifolds introduced by Casson. 
By using the Casson invariant, Boyer and Lines in \cite{BoyerLines} proved that a knot $K$ in $S^3$ satisfying $\Delta''_K (1) \ne 0$ has no cosmetic surgeries. 
Here $\Delta_K (t)$ denotes the (symmetrized) Alexander polynomial for $K$. 
That is, $\Delta_K (t)$ satisfies that $\Delta_K ( t^{-1} ) = \Delta_K (t)$ and $\Delta_K (1) =1$. 

The other one is the following excellent result recently obtained by Ni and Wu in \cite{NiWu}. 
Suppose that $K$ is a non-trivial knot in $S^3$ and $r_1, r_2 \in \mathbb{Q} \cup \{ 0/1 \}$ are two distinct slopes such that the surgered manifolds $K(r_1), K (r_2)$ are orientation-preservingly homeomorphic. 
Then $r_1, r_2$ satisfy that 
(a) $r_1 = - r_2$, 
(b) $q^2 \equiv -1 \mod p$ for $r_1 = p/q$, 
(c) $\tau (K) = 0$, 
where $\tau$ is the invariant defined by Ozsv\'{a}th-Szab\'{o} in \cite{OzsvathSzabo03}. 
This result is obtained by using Heegaard Floer homology. 
We remark that, for alternating knots, $\tau (K) = - \sigma (K)$ holds \cite[Theorem 1.4]{OzsvathSzabo03}, where $\sigma (K)$ denotes the signature of $K$.

Now Proposition \ref{AtMost9} follows from Table \ref{table}. 
To fill the table, we use the values given in \textit{Knotinfo} \cite{ChaLivingston}. 
Also we can use the facts that the half of $\Delta''_K (1)$ is equal to 
%the Casson invariant of the surgered manifold, and further is equal to 
the second coefficient of the Conway polynomial. 
This well-known fact is due to Casson, and, for details, see \cite{AkbulutMcCarthy} and \cite[Section 1]{Hoste} for example.

\begin{table}[htb]
    \begin{center}
    \caption{Two-bridge knots of at most 9 crossings with trivial $\tau$-invariant}\label{table}
    \begin{tabular}{|c|c|c|c|c|}\hline
    	Name	&	Schubert Form		&	Alexander Polynomial						&	$\Delta''_K (1)$		\\ \hline
	$4_1$	&	$S(5,2)$			&	$ -t^{-1}+3- t$								&	$-2$	\\
	$6_1$	&	$S(9,7)$			&	$-2t^{-1}+5-2t$								&	$-4$	\\
	$6_3$	&	$S(13,5)$			&	$t^{-2}-3t^{-1}+ 5-3t+ t^2$						&	$2$	\\
	$7_7$	&	$S(21,8)$			&	$t^{-2}-5t^{-1}+ 9-5t+ t^2$						&	$-2$	\\
	$8_1	$	&	$S(13,11)$		&	$-3t^{-1}+7- 3t$								&	$-6$	\\
	$8_3	$	&	$S(17,4)$			&	$-4t^{-1}+9- 4t$								&	$-8$	\\
	$8_8	$	&	$S(25,9)$			&	$2t^{-2}-6t^{-1}+ 9-6t+ 2t^2$					&	$4$	\\
	$8_9	$	&	$S(25,7)$			&	$-t^{-3}+3t^{-2}- 5t^{-1}+7- 5t+3t^2- t^3$			&	$-4$	\\
	$8_{12}$	&	$S(29,12)$		&	$t^{-2}-7t^{-1}+ 13-7t+ t^2$					&	$-6$	\\
	$8_{13}$	&	$S(29,11)$		&	$2t^{-2}-7t^{-1}+ 11-7t+ 2t^2$					&	$2$	\\
	$9_{14}$	&	$S(37,14)$		&	$2t^{-2}-9t^{-1}+ 15-9t+ 2t^2$					&	$-2$	\\
	$9_{19}$	&	$S(41,16)$		&	$2t^{-2}-10t^{-1}+ 17-10t+ 2t^2$				&	$-4$	\\
	$9_{27}$	&	$S(49,19)$		&	$-t^{-3}+5t^{-2}- 11t^{-1}+15- 11t+5t^2- t^3$		&	0	\\
	\hline
    \end{tabular}
    \end{center}
  \end{table}

\section{$SL(2,\mathbb{C})$ Casson invariant }\label{SL(2,C)Casson}

We here recall briefly the definition of the $SL(2,\mathbb{C})$ Casson invariant, denoted by $\lambda_{SL(2,\mathbb{C})}$, based on \cite{BodenCurtis}. 
Let $M$ be a closed, orientable 3-manifold with a Heegaard splitting $H_1 \cup_F H_2$ with handlebodies $H_1 , H_2$ and a Heegaard surface $F$, that is, $H_1 \cup H_2 = M$ and $\partial H_1 = \partial H_2 = H_1 \cap H_2 = F$. 
Then the inclusion maps $F \to H_i$ and $H_i \to M$ for $i=1,2$ induce surjections on the fundamental groups. 
It then follows that $X(M) = X(H_1) \cap X(H_2) \subset X(F)$, where $X(M)$, $X(H_1)$, $X(H_2)$ and $X(F)$ denote the $SL(2,\mathbb{C})$-character varieties for $M$, $H_1$, $H_2$ and $F$ respectively. 
There are natural orientations on all the character varieties determined by their complex structures. 
The invariant $\lambda_{SL(2,\mathbb{C})}$ is (roughly) defined as an oriented intersection number of 
the subspaces of characters of irreducible representations in $X (H_1)$ and $X (H_2)$, which counts only compact, zero-dimensional components of the intersection. 
See \cite{Curtis} and \cite{CurtisErratum}, also \cite{BodenCurtis} for detailed definition. 

For the 3-manifolds obtained by Dehn surgeries on two-bridge knots, Boden and Curtis studied the $SL(2,\mathbb{C})$ Casson invariant $\lambda_{SL(2,\mathbb{C})}$ in detail in \cite{BodenCurtis}, 
and showed that $\lambda_{SL(2,\mathbb{C})}$ can be calculated as follows (\cite[Theorem 2.5]{BodenCurtis}): 
Let $K$ be a two-bridge knot with Schubert form $S(\alpha,\beta)$ and $K(p/q)$ the 3-manifold obtained by $p/q$-surgery on $K$. 
Suppose that $p/q$ is not a strict boundary slope and no $p'$-th root of unity is a root of $\Delta_K (t)$, where $p'=p$ if $p$ is odd and $p' = p/2$ if $p$ is even. 
Then
$$\lambda_{SL(2,\mathbb{C})} (K(p/q) ) =  \begin{cases}
\dfrac{\| p/q\|_{T}}{2} & \text{if $p$ is even,} \\[10pt]
\dfrac{\| p/q \|_{T}}{2} - \dfrac{\alpha - 1}{4} & \text{if $p$ is odd.}
\end{cases} $$
Here $\| p/q\|_{T}$ denotes the total Culler-Shalen seminorm of $p/q$. 

Recall that a slope on the boundary of a knot exterior $M$ is called a \textit{boundary slope} if there exists an essential surface $F$ embedded in $M$ with nonempty boundary representing the slope, and a boundary slope is called \textit{strict} if it is the boundary slope of an essential surface that is not the fiber of any fibration over the circle. 

In this paper, we omit the detailed definition of the total Culler-Shalen norm 
(see \cite{BodenCurtis} for example), 
while the calculation of the total Culler-Shalen seminorm of a slope for a two-bridge knot was essentially given in \cite{Ohtsuki}. 
In fact, the following explicit formula is presented as \cite[Proposition 2.3]{BodenCurtis}. 
$$
|| p/q ||_T = \frac{1}{2} \left( - |p| + \sum_i W_i \, \Delta( p/q , N_i )  \right)
$$
Here $N_1 , \cdots , N_n$ denote the boundary slopes for a two-bridge knot $K$. 
By the result given in \cite{HatcherThurston}, a boundary slope for a two-bridge knot $ S(\alpha, \beta)$ is associated to a continued fraction expansion of $\alpha / \beta$. 
Then $W_i$ is set to be $\prod_j ( | n_j | -1 )$ for the continued fraction expansion $[n_1 , \cdots , n_m]$ associated to $N_i$. 

Combining these formulae, we see the following. 
\begin{eqnarray*}
\lambda_{SL(2, \Bbb C)} (M_{p/q}) - \lambda_{SL(2, \Bbb C)} (M_{-p/q}) 
&=&
\frac{1}{2} \left(\;\left\Vert \; \frac{p}{q} \; \right\Vert_T - \left\Vert \; - \frac{p}{q} \; \right\Vert_T \;\right) \\
&=& 
\frac{1}{4} \sum_i W_i \left( \Delta \left( \frac{p}{q} , N_i \right) - \Delta \left( - \frac{p}{q} , N_i \right) \right) \\
&=& 
\frac{1}{4} \sum_i W_i \left( | p - q N_i |  - | -p - q N_i | \right)
\end{eqnarray*}

In particular, we have the following when $p=1$. 
\begin{eqnarray*}
& & \lambda_{SL(2, \Bbb C)} (M_{1/q}) - \lambda_{SL(2, \Bbb C)} (M_{-1/q}) \\
&=&
\frac{1}{2} \left\Vert \; \frac{1}{q} \; \right\Vert_T - \left\Vert \; - \frac{1}{q} \; \right\Vert_T \\
&=& 
\frac{1}{4} \sum_i W_i \left( \Delta \left( \frac{1}{q} , N_i \right) - \Delta \left( -\frac{1}{q} , N_i \right) \right) \\
&=& 
\frac{1}{4} \sum_i W_i \left( | 1 - q N_i |  - | -1 - q N_i | \right)\\
&=& 
\frac{1}{4} \left( \sum_{N_i >0} W_i \left(  ( q N_i - 1 )  - ( 1 + q N_i )  \right)
+
\sum_{N_i <0} W_i \left( ( 1 - q N_i )  - ( -1 - q N_i )  \right) \right)\\
&=& 
\frac{1}{2} \left(\,- \sum_{N_i >0} W_i  + \sum_{N_i <0} W_i \,\right)
\end{eqnarray*}

Consequently, together with the result of Ni and Wu given in \cite{NiWu}, a two-bridge knot has no purely cosmetic surgery pairs yielding homology 3-spheres if $- \sum_{N_i >0} W_i  + \sum_{N_i <0} W_i \ne 0$ holds. 

On the other hand, in \cite[Theorem 2]{MattmanMaybrunRobinson}, the following  method to enumerate all the continued fractions associated to boundary slopes for a two-bridge knot was given. 
The boundary slopes of a two-bridge knot with Schubert form $S(\alpha,\beta)$ are associated to the continued fractions obtained by applying the following substitutions at non-adjacent positions in the \textit{simple continued fraction} (i.e., the unique one with all terms positive and the last term greater than 1) of $\beta / \alpha $. 
The following exhibit the substitutions at position 2. 

\smallskip

\noindent
Substitution 1: \\
$[b_0, 2 b_1 , b_2, b_3 , \dots , b_n] \mapsto [b_0 +1, (-2, 2)^{b_1 -1}, -2 , b_2 + 1, b_3 , \dots , b_n] $\\
Substitution 2: \\
$[b_0, 2 b_1 +1 , b_2, b_3 , \dots , b_n] \mapsto [b_0 +1, (-2, 2)^{b_1} , - b_2 - 1, - b_3 , \dots , - b_n] $

\smallskip

Let us recall how to calculate the boundary slopes from a continued fraction. 

By the result given in \cite{HatcherThurston}, a continued fraction expansion is associated to a boundary slope if it has partial quotients which are all at least two in absolute value. We call such a continued fraction a \textit{boundary slope continued fraction}. 

Given a two-bridge knot with Schubert form $S(\alpha,\beta)$, consider a boundary slope continued fraction expansion $[c, b_0, b_1,\cdots , b_n]$  of $\beta / \alpha$ with integer part $c$ and $|b_i| \ge 2$ for $0 \le i \le n$. 
Compare the signs of the terms $b_1 , \cdots , b_n$ to the pattern $[+ - + - \cdots ]$, 
and let $n^+$ (resp. $n^-$) be the number of terms matching (resp. not matching) the pattern. 
Note that, among the boundary slope continued fractions, there is a unique one having all terms even; that is associated to the longitude (i.e., the boundary slope of a Seifert surface). 
Let $n_0^+$ and $n_0^-$ be the corresponding values for the continued fraction associated to the longitude. 
Then the boundary slope associated to the continued fraction is presented as $2((n^+ -  n^- ) -  (n^+_0 -  n^-_0 ))$.

\section{Calculation}\label{calculation}

In this section, we give a proof of Theorem \ref{MainThm}. 

As explained in the previous section, to prove the theorem, it suffices to enumerate all the boundary slopes by using the substitution method, and calculate $\sum_{N_i >0} W_i$ and $\sum_{N_i <0} W_i $ for the obtained boundary slopes. 

First we consider the case $x=1$, that is, the case of $9_{27}$. 
We start with the simple continued fraction of $18/49$, which is represented as the continued fraction $[0,2,1,2,1,1,2]$. 
We use $6$-tuples of the form $(b_1,b_2,b_3,b_4,b_5,b_6)$ with $b_j=0,1$ to show where substitutions are applied. 
As an example, $(0,0,1,0,0,1)$ means the substitution rule is applied at positions $3$ and $6$. 
Then we have a boundary slope continued fraction $[0,2,2,-2,2,2,-2]$ which is the longitude continued fraction. 
Hence we see that $n^+_0=3$ and  $n^-_0=3$.  

Here recall that each term of boundary slope continued fractions must be at least two in absolute value. 
Hence $(0,0,0,b_4,b_5,b_6)$ does not fit in our case since the term of $1$ at position $2$ remains after substitutions. 
Similarly, we can eliminate the possibility of $(b_1,b_2,0,0,0,b_6)$ and $(b_1,b_2,b_3,0,0,0)$. 
We also note that no two terms of $1$ are adjacent in a $6$-tuple. 
It is therefore enough to consider the following 10 cases to obtain all the boundary slope continued fractions.

\bigskip
Case 1. $(0,0,1,0,0,1)$. 

Then we have 
$[0,2,2,-2,2,2,-2]$. 

Hence $n^+_1=3$, $n^-_1=3$ and $N_1=0$. 

\bigskip
Case 2. $(0,0,1,0,1,0)$. 

Then we have 
$[0,2,2,-2,3,-3]$. 

Hence $n^+_2=1$, $n^-_2=4$, $N_2=-6$ and $W_2=4$. 

\bigskip
Case 3. $(0,1,0,0,1,0)$. 

Then we have 
$[0,3,-3,-2,3]$. 

Hence $n^+_3=2$, $n^-_3=2$ and $N_3=0$. 

\bigskip
Case 4. $(0,1,0,1,0,0)$. 

Then we have 
$[0,3,-4,2,2]$. 

Hence $n^+_4=3$, $n^-_4=1$, $N_4=4$ and $W_4=6$. 

\bigskip
Case 5. $(0,1,0,1,0,1)$. 

Then we have 
$[0,3,-4,3,-2]$. 

Hence $n^+_5=4$, $n^-_5=0$, $N_5=8$ and $W_5=12$. 

\bigskip
Case 6. $(1,0,0,0,1,0)$. 

Then we have 
$[1,-2,2,2,2,-3]$. 

Hence $n^+_6=1$, $n^-_6=4$, $N_6=-6$ and $W_6=2$. 

\bigskip
Case 7. $(1,0,0,1,0,0)$. 

Then we have 
$[1,-2,2,3,-2,-2]$. 

Hence $n^+_7=2$, $n^-_7=3$, $N_7=-2$ and $W_7=2$. 

\bigskip
Case 8. $(1,0,0,1,0,1)$. 

Then we have 
$[1,-2,2,3,-3,2]$. 

Hence $n^+_8=2$, $n^-_8=3$, $N_8=-2$ and $W_8=4$. 

\bigskip
Case 9. $(1,0,1,0,0,1)$. 

Then we have 
$[1,-2,3,-2,2,2,-2]$. 

Hence $n^+_9=2$, $n^-_9=4$, $N_9=-4$ and $W_9=2$. 

\bigskip
Case 10. $(1,0,1,0,1,0)$. 

Then we have 
$[1,-2,3,-2,3,-3]$. 

Hence $n^+_{10}=0$, $n^-_{10}=5$, $N_{10}=-10$ and 
$W_{10}=8$. 

\bigskip
We therefore see that 

$$
\lambda_{SL(2, \Bbb C)} (M_{1/q}) - \lambda_{SL(2, \Bbb C)} (M_{-1/q}) 
= \frac{1}{2} \left( - \sum_{N_i >0} W_i  + \sum_{N_i <0} W_i \right) 
= 4.
$$

%%%%%%%%%%%%%%%%%%%%%%%%%%%%%%%%%%%%%%%%%%

Next we consider the general case, where $x \ge 2$.

We remark that the Schubert form of the knot $K_x$ is described as $S( ( 8 x^2-1)^2 , 32x^3-8x^2-8x+2)$. 
Thus its simple continued fraction is given as $[0,2x,1,1,2x-2,1,2x-1,1,1,2x-1]$. 
We in turn use $9$-tuples of the form $(b_1,b_2,b_3,b_4,b_5,b_6,b_7,b_8,b_9)$ with $b_j=0,1$ to show where substitutions are applied. 
The longitude continued fraction is obtained from $(0,0,1,0,1,0,0,1,0)$ and is $[0,2x,2,-2x,2x,2,-2x]$. 

There is no possibility of $(0,0,0,b_4,b_5,b_6,b_7,b_8,b_9)$, 
$(b_1,0,0,0,b_5,b_6,b_7,b_8,b_9)$, $(b_1,b_2,b_3,0,0,0,b_7,b_8,b_9)$,  
$(b_1,b_2,b_3,b_4,b_5,0,0,0,b_9)$ and $(b_1,b_2,b_3,b_4,b_5,b_6,0,0,0)$ 
since each term of boundary slope continued fractions is at least two in absolute value.
We again note that no two terms of $1$ are adjacent in a $9$-tuple. 
It is therefore enough to consider the following 25 cases to obtain all the boundary slope continued fractions. 

\bigskip
Case 1. $(0,0,1,0,0,1,0,0,1)$. 

Then we have 
$[0,2x,2,-2x+1,-2,(2,-2)^{x-1},2,2,(-2,2)^{x-1}]$. 

Hence $n^+_1=2x+1$, $n^-_1=2x+1$ and $N_1=0$. 

\bigskip
Case 2. $(0,0,1,0,0,1,0,1,0)$. 

Then we have 
$[0,2x,2,-2x+1,-2,(2,-2)^{x-1},3,-2x]$. 

Hence $n^+_2=2x+2$, $n^-_2=2$, $N_2=4x$ and $W_2=4(x-1)(2x-1)^2$. 

\bigskip
Case 3. $(0,0,1,0,1,0,0,1,0)$. 

Then we have 
$[0,2x,2,-2x,2x,2,-2x]$. 

Hence $n^+_3=3$, $n^-_3=3$ and $N_3=0$. 

\bigskip
Case 4. $(0,0,1,0,1,0,1,0,0)$. 

Then we have 
$[0,2x,2,-2x,2x+1,-2,-2x+1]$. 

Hence $n^+_4=2$, $n^-_4=4$, $N_4=-4$ and $W_4=4x(x-1)(2x-1)^2$. 

\bigskip
Case 5. $(0,0,1,0,1,0,1,0,1)$. 

Then we have 
$[0,2x,2,-2x,2x+1,-3,(2,-2)^{x-1}]$. 

Hence $n^+_5=1$, $n^-_5=2x+2$, $N_5=-4x-2$ and $W_5=4x(2x-1)^2$. 

\bigskip
Case 6. $(0,1,0,0,0,1,0,0,1)$. 

Then we have 
$[0,2x+1,-2,-2x+2,-2,(2,-2)^{x-1},2,2,(-2,2)^{x-1}]$. 

Hence $n^+_6=2x+2$, $n^-_6=2x$, $N_6=4$ and $W_6=2x(2x-3)$. 

\bigskip
Case 7. $(0,1,0,0,0,1,0,1,0)$. 

Then we have 
$[0,2x+1,-2,-2x+2,-2,(2,-2)^{x-1},3,-2x]$. 

Hence $n^+_7=2x+3$, $n^-_7=1$, $N_7=4x+4$ and $W_7=4x(2x-1)(2x-3)$. 

\bigskip
Case 8. $(0,1,0,0,1,0,0,1,0)$. 

Then we have 
$[0,2x+1,-2,-2x+1,2x,2,-2x]$. 

Hence $n^+_8=4$, $n^-_8=2$, $N_8=4$ and $W_8=4x(x-1)(2x-1)^2$. 

\bigskip
Case 9. $(0,1,0,0,1,0,1,0,0)$. 

Then we have 
$[0,2x+1,-2,-2x+1,2x+1,-2,-2x+1]$. 

Hence $n^+_9=3$, $n^-_9=3$ and $N_9=0$. 

\bigskip
Case 10. $(0,1,0,0,1,0,1,0,1)$. 

Then we have 
$[0,2x+1,-2,-2x+1,2x+1,-3,(2,-2)^{x-1}]$. 

Hence $n^+_{10}=2$, $n^-_{10}=2x+1$, $N_{10}=-4x+2$ and 
$W_{10}=16x^2(x-1)$. 

\bigskip
Case 11. $(0,1,0,1,0,0,0,1,0)$. 

Then we have 
$[0,2x+1,-3,(2,-2)^{x-1},-2x+1,-2,2x]$. 

Hence $n^+_{11}=2x+2$, $n^-_{11}=1$, $N_{11}=4x+2$ and 
$W_{11}=8x(x-1)(2x-1)$. 

\bigskip
Case 12. $(0,1,0,1,0,0,1,0,0)$. 

Then we have 
$[0,2x+1,-3,(2,-2)^{x-1},-2x,2,2x-1]$. 

Hence $n^+_{12}=2x+1$, $n^-_{12}=2$, $N_{12}=4x-2$ and 
$W_{12}=8x(x-1)(2x-1)$. 

\bigskip
Case 13. $(0,1,0,1,0,0,1,0,1)$. 

Then we have 
$[0,2x+1,-3,(2,-2)^{x-1},-2x,3,(-2,2)^{x-1}]$. 

Hence $n^+_{13}=2x$, $n^-_{13}=2x$ and $N_{13}=0$. 

\bigskip
Case 14. $(0,1,0,1,0,1,0,0,1)$. 

Then we have 
$[0,2x+1,-3,(2,-2)^{x-2},2,-3,(2,-2)^{x-1},2,2,(-2,2)^{x-1}]$. 

Hence $n^+_{14}=4x-1$, $n^-_{14}=2x-1$, $N_{14}=4x$ and $W_{14}=8x$. 

\bigskip
Case 15. $(0,1,0,1,0,1,0,1,0)$. 

Then we have 
$[0,2x+1,-3,(2,-2)^{x-2},2,-3,(2,-2)^{x-1},3,-2x]$. 

Hence $n^+_{15}=4x$, $n^-_{15}=0$, $N_{15}=8x$ and $W_{15}=16x(2x-1)$. 

\bigskip
Case 16. $(1,0,0,1,0,0,0,1,0)$. 

Then we have 
$[1,(-2,2)^x,2,(-2,2)^{x-1},2x-1,2,-2x]$. 

Hence $n^+_{16}=2x+1$, $n^-_{16}=2x+1$ and $N_{16}=0$.  

\bigskip
Case 17. $(1,0,0,1,0,0,1,0,0)$. 

Then we have 
$[1,(-2,2)^x,2,(-2,2)^{x-1},2x,-2,-2x+1]$. 

Hence $n^+_{17}=2x$, $n^-_{17}=2x+2$, $N_{17}=-4$ and 
$W_{17}=2(x-1)(2x-1)$. 

\bigskip
Case 18. $(1,0,0,1,0,0,1,0,1)$. 

Then we have 
$[1,(-2,2)^x,2,(-2,2)^{x-1},2x,-3,(2,-2)^{x-1}]$. 

Hence $n^+_{18}=2x-1$, $n^-_{18}=4x$, $N_{18}=-4x-2$ and 
$W_{18}=2(2x-1)$. 

\bigskip
Case 19. $(1,0,0,1,0,1,0,0,1)$. 

Then we have 
$[1,(-2,2)^x,2,(-2,2)^{x-2},-2,3,(-2,2)^{x-1},-2,-2,(2,-2)^{x-1}]$. 

Hence $n^+_{19}=4x-2$, $n^-_{19}=4x-1$, $N_{19}=-2$ and $W_{19}=2$. 

\bigskip
Case 20. $(1,0,0,1,0,1,0,1,0)$. 

Then we have 
$[1,(-2,2)^x,2,(-2,2)^{x-2},-2,3,(-2,2)^{x-1},-3,2x]$. 

Hence $n^+_{20}=4x-1$, $n^-_{20}=2x$, $N_{20}=4x-2$ and $W_{20}=4(2x-1)$. 

\bigskip
Case 21. $(1,0,1,0,0,1,0,0,1)$. 

Then we have 
$[1,(-2,2)^{x-1},-2,3,-2x+1,-2,(2,-2)^{x-1},2,2,(-2,2)^{x-1}]$. 

Hence $n^+_{21}=2x$, $n^-_{21}=4x$, $N_{21}=-4x$ and $W_{21}=4(x-1)$. 

\bigskip
Case 22. $(1,0,1,0,0,1,0,1,0)$. 

Then we have 
$[1,(-2,2)^{x-1},-2,3,-2x+1,-2,(2,-2)^{x-1},3,-2x]$. 

Hence $n^+_{22}=2x+1$, $n^-_{22}=2x+1$ and $N_{22}=0$. 

\bigskip
Case 23. $(1,0,1,0,1,0,0,1,0)$. 

Then we have 
$[1,(-2,2)^{x-1},-2,3,-2x,2x,2,-2x]$. 

Hence $n^+_{23}=2$, $n^-_{23}=2x+2$, $N_{23}=-4x$ and 
$W_{23}=2(2x-1)^3$. 

\bigskip
Case 24. $(1,0,1,0,1,0,1,0,0)$. 

Then we have 
$[1,(-2,2)^{x-1},-2,3,-2x,2x+1,-2,-2x+1]$. 

Hence $n^+_{24}=1$, $n^-_{24}=2x+3$, $N_{24}=-4x-4$ and 
$W_{24}=8x(x-1)(2x-1)$. 

\bigskip
Case 25. $(1,0,1,0,1,0,1,0,1)$. 

Then we have 
$[1,(-2,2)^{x-1},-2,3,-2x,2x+1,-3,(2,-2)^{x-1}]$. 

Hence $n^+_{25}=0$, $n^-_{25}=4x+1$, $N_{25}=-8x-2$ and 
$W_{25}=8x(2x-1)$. 

\bigskip
Since we are assuming $x\ge 2$, 

\begin{eqnarray*}
\lambda_{SL(2, \Bbb C)} (M_{1/q}) - \lambda_{SL(2, \Bbb C)} (M_{-1/q}) 
&=& 
\frac{1}{2} \left( - \sum_{N_i >0} W_i  + \sum_{N_i <0} W_i \right)\\
&=& 
8x^2-12x+2 \\
&=& 
8\left(x-\frac{3}{4} \right)^2-\frac{5}{2} \\
&>& 
0\,.
\end{eqnarray*}

\section{Alexander polynomial}\label{AlexPoly}

In this section, we justify Remark \ref{rmk2} in Section 1 as follows. 

\begin{proposition}
Let $K_x$ be a two-bridge knot with Conway form $C[2x,2,-2x,2x,2,-2x]$ for $x \ge 1$. 
Then $\Delta ''_{K_x} (1) =0$ and $\tau(K_x)=0$ hold. 
Here $\Delta_{K_x} (t)$ denotes the Alexander polynomial of $K_x$ normalized to be symmetric and to satisfy $\Delta_{K_x} (1)=1$.
\end{proposition}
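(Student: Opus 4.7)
The plan is to prove the two claims $\Delta''_{K_x}(1) = 0$ and $\tau(K_x) = 0$ independently.

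For the Alexander polynomial claim, I would work with the Conway polynomial $\nabla_{K_x}(z)$ (with $z = t^{1/2} - t^{-1/2}$), since the identity $\tfrac{1}{2}\Delta''_{K_x}(1) = a_2(K_x)$, the coefficient of $z^2$ in $\nabla_{K_x}(z)$, already recalled in Section \ref{2-bridge}, reduces the task to showing $a_2(K_x) = 0$. For a rational knot with Conway form $[a_1, \ldots, a_n]$ there is a standard matrix formalism, coming from the Burau representation along the continued fraction, that assigns to each entry a matrix $M(a_i; z) \in GL_2(\mathbb{Z}[z])$ such that $\nabla_{K_x}(z)$ is read off from a distinguished entry of the product $M(a_1; z) \cdots M(a_n; z)$. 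Since only terms of order at most $z^2$ are needed, I would compute this product modulo $z^3$, exploiting the block structure of $[2x, 2, -2x, 2x, 2, -2x]$ as two identical copies of $[2x, 2, -2x]$: first compute the half-block matrix $B(z) = M(2x;z)M(2;z)M(-2x;z)$, observe that modulo $z^3$ its diagonal entries collapse while its off-diagonal entries remain polynomials of degree at most two, and then square. The relevant entry of $B(z)^2$ then becomes $1 + O(z^3)$, giving $a_2(K_x) = 0$ and hence $\Delta''_{K_x}(1) = 0$. The case $x = 1$ provides a consistency check, since $\nabla_{9_{27}}(z) = 1 - z^4 - z^6$ matches the Alexander polynomial listed in Table \ref{table}.

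For the $\tau$-invariant claim, I would use that $K_x$ is alternating (every two-bridge knot is), so the Ozsvath-Szabo identity already cited in Section \ref{2-bridge} reduces the problem to proving $\sigma(K_x) = 0$. To this end I would apply a signature formula for two-bridge knots expressed directly in terms of the Conway form, such as Murasugi's recursive formula on the continued fraction or the Gordon-Litherland formula applied to the standard alternating projection of $K_x$, and exploit the symmetry of $[2x, 2, -2x, 2x, 2, -2x]$ into two identical blocks with balanced signs so that positive and negative contributions cancel.

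The main obstacle is the signature calculation. Classical Murasugi-type formulas read cleanest for continued fractions whose entries share a single sign, whereas ours has mixed signs, so either a generalized version must be invoked carefully or one first converts to an all-positive form. An attractive alternative that sidesteps this is to prove algebraic sliceness directly, by exhibiting a factorization $\Delta_{K_x}(t) = p_x(t)\,p_x(t^{-1})$ with $p_x(1) = \pm 1$; for $x = 1$ one may take $p_1(t) = 1 - 2t + 3t^2 - t^3$, and a candidate for general $x$ should emerge from the matrix product built in the first part, with the closed-form verification being the most delicate step. Granting such a factorization, $\sigma(K_x) = 0$ follows automatically. The Alexander polynomial calculation itself is straightforward once the matrix recursion and the mod-$z^3$ truncation are in place.
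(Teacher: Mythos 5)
Your treatment of the two claims diverges from the paper's in both halves, and the second half contains a genuine logical gap. For $\tau(K_x)=0$, your fallback argument --- exhibit a Fox--Milnor factorization $\Delta_{K_x}(t)=p_x(t)\,p_x(t^{-1})$ with $p_x(1)=\pm1$ and conclude that $\sigma(K_x)=0$ ``automatically'' --- does not work. The Fox--Milnor condition is a statement about the Alexander polynomial alone, and the signature is not determined by the Alexander polynomial: for instance $3_1 \# 3_1$ has $\Delta(t)=p(t)p(t^{-1})$ with $p(t)=t-1+t^{-1}$ and $p(1)=1$, yet $\sigma(3_1\# 3_1)=-4$. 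What does force $\sigma=0$ is genuine algebraic sliceness (a metabolizer for the Seifert form), but a Fox--Milnor factorization of $\Delta$ does not establish that; you have conflated the two. Your primary route --- a Murasugi/Gordon--Litherland signature computation on the mixed-sign continued fraction --- could in principle work, but you explicitly leave it as the ``main obstacle'' and do not carry it out, so as written the claim $\tau(K_x)=0$ is not proved. The paper avoids all of this: $K_x$ is in fact a slice knot (observed by Casson--Gordon; see Lisca, Lemma 8.2), and $\tau$ vanishes on slice knots by Ozsv\'ath--Szab\'o, which disposes of the second claim in two lines.

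For $\Delta''_{K_x}(1)=0$ your route (Conway polynomial via a transfer-matrix product along the continued fraction, truncated mod $z^3$) is different from the paper's but plausible in outline. The paper instead writes down the explicit $6\times 6$ Seifert matrix of the genus-three Seifert surface attached to the Conway form $[2A,-2B,2C,-2D,2E,-2F]$, obtains the closed formula $\Delta_{K_x}(t)=-x^4(1-t)^6-x^2t(1-t)^4+t^3$ up to normalization, and checks $\Delta''_{K_x}(1)=-18x^4+8(6x^4-x^2)+2(-15x^4+4x^2)=0$ by direct differentiation. Your key step --- that the relevant entry of $B(z)^2$ is $1+O(z^3)$ --- is asserted rather than verified, so this half too is a plan rather than a proof, although your $x=1$ consistency check $\nabla_{9_{27}}(z)=1-z^4-z^6$ is correct and does match the table.
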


\begin{proof}
Let $K_x$ be a two-bridge knot $K$ 
with Conway form $[2x,2,-2x,2x,2,-2x]$ for $x \ge 1$. 
Then $K_x$ is a slice knot, 
originally observed by Casson and Gordon, 
and see \cite[Lemma 8.2]{Lisca} for a proof. 
On the other hand, the invariant $\tau$ must vanish for slice knots as shown in \cite[Corollary 1.3]{OzsvathSzabo03}. 
Thus we have $\tau(K_x)=0$ for our knot $K_x$ with $x \ge 1$. 

Now let us calculate the Alexander polynomial for $K_x$. 
This is just a straightforward calculation, but we include it for readers' convenience. 

In general, a two-bridge knot with Conway form $[2A,-2B,2C,-2D,2E,-2F]$ is depicted as in Figure \ref{fig1}. 
Note that such a knot is of genus three, and any two-bridge knot of genus three has such a Conway form. 
In the figure, $A$ to $F$ denote the numbers of horizontal full-twists with signs of the twists. 

\begin{figure}[htb]\begin{center}
  {\unitlength=1cm
  \begin{picture}(10,2)
   \put(0,0){\includegraphics{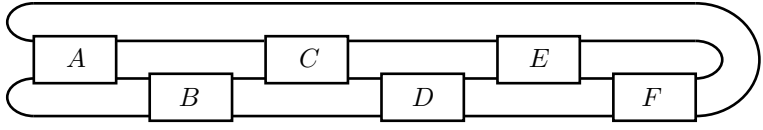}}
   \put(.8,.94){$A$}
   \put(3.9,.94){$C$}
   \put(6.95,.94){$E$}
   \put(2.3,.43){$B$}
   \put(5.4,.43){$D$}
   \put(8.45,.43){$F$}
  \end{picture}}
  \caption{$A$ to $F$ denote the numbers of full-twists.}
  \label{fig1}
\end{center}\end{figure}

Such a Seifert surface of genus three can be deformed into the one as shown in Figure \ref{fig2}. 
To calculate the Seifert matrix, we set a basis $a_1 , \cdots , a_6$ 
of the first homology group of the surface, as illustrated in Figure 2. 

\begin{figure}[htb]\begin{center}
  {\unitlength=1cm
  \begin{picture}(10,3.3)
   \put(0,0){\includegraphics{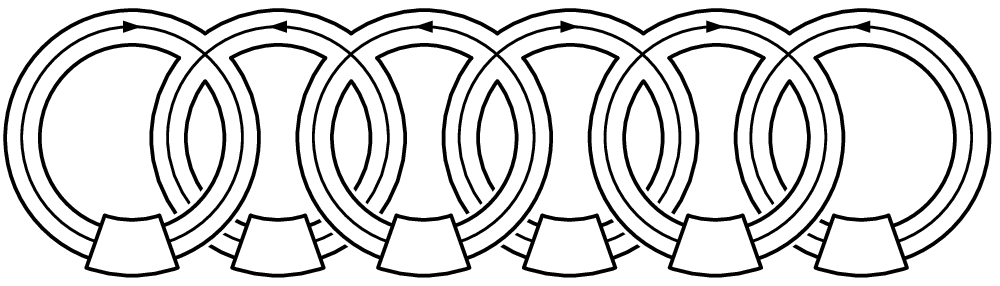}}
   \put(1.15,3){$a_1$}
   \put(2.7,3){$a_2$}
   \put(4.15,3){$a_3$}
   \put(5.6,3){$a_4$}
   \put(7.1,3){$a_5$}
   \put(8.6,3){$a_6$}
   \put(1.2,.33){$A$}
   \put(2.68,.33){$B$}
   \put(4.18,.33){$C$}
   \put(5.64,.33){$D$}
   \put(7.13,.33){$E$}
   \put(8.63,.33){$F$}
  \end{picture}}
  \caption{}
  \label{fig2}
\end{center}\end{figure}

Then we have the Seifert matrix as follows. 
$$M=\begin{pmatrix}
A & 0 & 0 & 0 & 0 & 0 \\
1 & B & 1 & 0 & 0 & 0 \\
0 & 0 & C & 0 & 0 & 0 \\
0 & 0 & 1 & D & 1 & 0 \\
0 & 0 & 0 & 0 & E & 0 \\
0 & 0 & 0 & 0 & 1 & F
\end{pmatrix}$$ 

Then $\Delta_{K_x} (t) = \det ( M - t\;{}^tM ) $ is obtained as
$$\det
\begin{pmatrix}
(1-t)A & -t & 0 & 0 & 0 & 0 \\
1 & (1-t)B & 1 & 0 & 0 & 0 \\
0 & -t & (1-t)C & -t & 0 & 0 \\
0 & 0 & 1 & (1-t)D & 1 & 0 \\
0 & 0 & 0 & -t & (1-t)E & -t \\
0 & 0 & 0 & 0 & 1 & (1-t)F
\end{pmatrix} $$ 

We then have the following polynomial of degree 6; 
$$ABCDEF (1-t)^6 
+ ( (A+C)DEF - ABC(D+F) +ABEF) t (1-t)^4 + (AB+EF) t^2 (1-t)^2 + t^3$$ 

Now we consider the Conway form $[2x,2,-2x,2x,2,-2x]$, 
that is, 
$$A=x, B=-1, C=-x, D=-x, E=1, F=x\,. $$

This implies that $\Delta_{K_x} (t) = - x^4 (1-t)^6 - x^2 t (1-t)^4 + t^3$. 

After normalization, we have the following. 
$$\Delta_{K_x} (t) = - x^4 ( t^{-3} + t^3) +( 6x^4 - x^2 ) ( t^{-2} + t^2 ) - ( 15 x^4 - 4 x^2) ( t^{-1} + t ) + 20 x^4 - 6 x^2 + 1$$ 

It follows that;

$\Delta'_{K_x} (t) =
- x^4 ( -3 t^{-4} + 3 t^2 ) +( 6 x^4 - x^2 ) ( -2 t^{-3} + 2 t ) + ( - 15 x^4 +4 x^2) ( - t^{-2} + 1 ) $ 

$\Delta''_{K_x} (t) = 
- x^4 ( 12 t^{-5} + 6 t ) +( 6 x^4 - x^2 ) ( 6 t^{-4} + 2 ) + ( - 15 x^4 +4 x^2) ( 2 t^{-3} ) $ 

$\Delta''_{K_x} (1) = - 18 x^4 + 8 ( 6 x^4 - x^2 ) + 2 ( - 15 x^4 +4 x^2 ) =0\;.$

\end{proof}

\section*{Acknowledgements}

The authors would like to thank Hitoshi Murakami for letting them know the relationship of the Casson invariant and the Conway polynomial. 

During our study, computer-experiments were quite useful. 
The experiments were performed by using the Dunfield's program \cite{Dunfield}, 
which implements the Hatcher-Thurston's algorithm to enumerate boundary slopes for two-bridge knots.

\end{document}